\def\11{\mathbf{1}}
\def\ER{Erd\H{o}s-R\'enyi\ }
\newcommand{\Pb}{\mathbb P}
\newcommand{\Qb}{\mathbb Q}
\newtheorem{thm}{Theorem}[section]
\newtheorem{proposition}[thm]{Proposition}
\newtheorem{lemma}[thm]{Lemma}
\newtheorem{remark}[thm]{Remark}
\numberwithin{equation}{section}
\title{Strong Detection Threshold for Correlated \ER Graphs with Constant Average Degree}
\author{Chenxu Feng\\Peking University}
\date{\today}
\begin{document}
\maketitle

\begin{abstract}
    Consider a pair of correlated \ER graphs $\mathcal G(n,\tfrac{\lambda}{n};s)$ that are subsampled from a common parent \ER graph with average degree $\lambda$ and subsampling probability $s$. We establish a sharp information-theoretic threshold for the detection problem between this model and two independent \ER graphs $\mathcal G(n,\tfrac{\lambda}{n})$, showing that strong detection is information-theoretically possible if and only if $s>\min\{ \tfrac{1}{\sqrt{\lambda}}, \sqrt{\alpha} \}$ where $\alpha\approx 0.338$ is the Otter's constant. Our result resolves a constant gap between \cite{DD23a} and \cite{WXY23}.
\end{abstract}

\section{Introduction}

In this paper, we study the information-theoretic threshold for detecting the correlation between a pair of sparse \ER graphs. To be mathematically precise, we first need to choose a model for a pair of correlated \ER graphs, and a natural choice is that the two graphs are independently sub-sampled from a common \ER graph, as describe more precisely next. 
For any integer $n$, denote by $\operatorname{U}_n$ the set of unordered pairs $(i,j)$ with $1\le i\neq j\le n$. Given two model parameters $\lambda,s\in (0,1)$, for $(i,j) \in \operatorname{U}_n$ let $I_{i,j}$ be independent Bernoulli variables with parameter $\tfrac{\lambda}{n}$, and let $J_{i,j}$ and $K_{i,j}$ be independent Bernoulli variables with parameter $s$. In addition, let $\pi_*$ be an independent uniform permutation on $[n]=\{1,\dots,n\}$. Then, we define a triple of correlated random graphs $(G,A,B)$ such that for $(i,j) \in \operatorname{U}_n$ (note that we identify a graph with its adjacency matrix)
\begin{align*}
    G_{i,j}=I_{i,j}, A_{i,j}=I_{i,j}J_{i,j}, B_{\pi_*(i),\pi_*(j)}=I_{i,j}K_{i,j}\,.
\end{align*}
We denote $\mathcal G(n,\tfrac{\lambda}{n};s)$ as the joint law of $(A,B)$. It is obvious that marginally $A$ is an \ER graph with edge density $\tfrac{\lambda s}{n}$ and so is $B$, whose law we denote as $\mathcal G(n,\tfrac{\lambda s}{n})$. In addition, the edge correlation $\rho$ is given by $\rho = s(1-\tfrac{\lambda}{n})/(1-\tfrac{\lambda s}{n})$. 

Given two graphs $(A,B)$, one natural question is the following hypothesis testing problem, where we need to decide whether $(A,B)$ are sampled from the law of two correlated \ER graphs $\mathcal G(n,\tfrac{\lambda}{n};s)$ (denoted as $\Pb$) or are sampled from two independent \ER graphs $\mathcal G(n,\tfrac{\lambda s}{n})$ (denoted as $\Qb$) . In the super-constant degree region where $\lambda=\omega(1)$, it was established in \cite{DD23a} that there is a sharp phase transition for this detection problem at $\lambda s^2=C_*(\lambda)$ for a specific constant $C_*(\lambda)$. They also note that when $\lambda$ is a constant such sharp phase transition phenomenon no longer exists, as one can always get a statistics better than random guessing as long as $s$ is a constant. Thus, in this paper we will focus on the strong detection problem when $\lambda=O(1)$ where the question is whether we can solve this detection problem with vanishing type-I and type-II errors. Our result can be summarized as follows:

\begin{thm}{\label{MAIN-THM}}
    Suppose $\lambda$ is a constant. Then for any constant $\epsilon>0$, 
    \begin{enumerate}
        \item[(1)] if $s<\min\{ \tfrac{1}{\sqrt{\lambda}}, \sqrt{\alpha} \}-\epsilon$ where $\alpha\approx 0.338$ is the Otter's threshold, we have $\operatorname{TV}(\Pb,\Qb)=1-\Omega(1)$ and thus strong-detection is information-theoretically impossible. 
        \item[(2)] if $s>\min\{ \tfrac{1}{\sqrt{\lambda}}, \sqrt{\alpha} \}+\epsilon$, we have $\operatorname{TV}(\Pb,\Qb)=1-o(1)$ and thus strong-detection is information-theoretically possible. 
    \end{enumerate}
\end{thm}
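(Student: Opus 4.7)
My plan is to treat parts (1) and (2) of Theorem \ref{MAIN-THM} separately, with the two quantities $1/\sqrt\lambda$ and $\sqrt\alpha$ inside the $\min$ corresponding to two qualitatively distinct detection mechanisms: one based on signed short-cycle statistics (with threshold $\lambda s^2=1$) and one based on large unlabeled-tree statistics (with threshold $s^2=\alpha$).

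For part (2), it suffices to exhibit a successful test in each of the two sub-regimes, since any $s>\min\{1/\sqrt\lambda,\sqrt\alpha\}+\epsilon$ lies in at least one. When $\lambda s^2>1+\epsilon'$, I would use the signed-cycle statistic introduced for this problem in \cite{DD23a}, counting signed $k$-cycles in $A$ and $B$ simultaneously for $k=k(n)\to\infty$ slowly; the mean-separation under $\Pb$ and $\Qb$ scales like $(\lambda s^2)^k$ while the standard deviation scales like $\lambda^{k/2}$, so the signal-to-noise ratio diverges whenever $\lambda s^2>1$. When $s^2>\alpha+\epsilon'$, I would use the large rooted-tree statistic of \cite{WXY23}: for each unlabeled rooted tree $\tau$ of appropriately chosen growing size $k$, count pairs $(u,v)\in[n]^2$ whose local neighborhoods in $A$ and $B$ both realize $\tau$, and combine across $\tau$ with Otter-Polya weights. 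Otter's constant enters because the number of unlabeled rooted trees of size $k$ grows like $Ck^{-3/2}\alpha^{-k}$, making $\alpha$ exactly the radius of convergence of the relevant generating function.

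For part (1), the unconditional $\chi^2$ second moment $\Eb_\Qb[L^2]$ with $L=\dif\Pb/\dif\Qb$ is infinite even far below the threshold because of rare-but-heavy contributions from atypical subgraph configurations, so I would run a conditional second-moment argument. Step A: define a good event $\Omega$ on which both $A$ and $B$ have no small subgraph with excess $\ge 1$ and no vertex with an atypically large $O(\log n)$-neighborhood, and verify $\Pb(\Omega),\Qb(\Omega)=1-o(1)$ via standard Poisson tail estimates for sparse $\mathcal G(n,\lambda s/n)$. Step B: write $L=\Eb_\pi L_\pi$ and expand
$$\Eb_\Qb\bigl[L^2\mathbf 1_\Omega\bigr]=\Eb_{\pi_1,\pi_2}\Eb_\Qb\bigl[L_{\pi_1}L_{\pi_2}\mathbf 1_\Omega\bigr],$$
factorizing the inner expectation as a product over the connected components of the overlap graph associated to $\pi_2^{-1}\pi_1$. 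Step C: bound each component contribution. Tree components yield a sum controlled by the Otter-Polya tree generating series, which converges exactly when $s^2<\alpha$, while components containing at least one cycle contribute a geometric-type sum with ratio $\lambda s^2$, convergent when $\lambda s^2<1$. Under the joint hypothesis $s<\min\{1/\sqrt\lambda,\sqrt\alpha\}-\epsilon$, both contributions are uniformly bounded, so $\Eb_\Qb[L^2\mathbf 1_\Omega]=O(1)$ and Cauchy--Schwarz gives $\operatorname{TV}(\Pb,\Qb)\le 1-\Omega(1)$.

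The main obstacle is the tree part of Step C: because $\alpha$ is exactly the radius of convergence of the tree generating series, the relevant sum is only borderline convergent, and one needs both the sharp Otter-Polya asymptotic $Ck^{-3/2}\alpha^{-k}$ and a careful coupling between the size-truncation imposed by $\Omega$ and the sizes of the tree components actually generated by the expansion. Making these two truncations compatible, and checking that the truncation error is negligible on $\Omega^c$ so as not to destroy the $\operatorname{TV}$ bound, is where the real work lies. The cycle part is by comparison routine since $\lambda s^2<1$ gives honest geometric decay in cycle length.
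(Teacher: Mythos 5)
Your overall architecture for part (1) --- a conditional second moment with a component/orbit decomposition, trees controlled by Otter's constant and cyclic structures by a geometric series in $\lambda s^2$ --- is the right general shape, and part (2) is indeed dispatched by citing prior work (though the test for $\lambda s^2>1$ in \cite{DD23a} is the densest-subgraph statistic, not signed cycles, and the tree-counting test is from \cite{MWXY21+}). However, there is a genuine gap in part (1), and it sits exactly at the point you flag as ``the main obstacle.'' Your good event $\Omega$ is a property of $A$ and $B$ separately, holding with probability $1-o(1)$; this is essentially the truncation of \cite{WXY23}, and the paper points out that this route only yields impossibility for $s<\min\{\tfrac{1}{\sqrt\lambda},0.1\}$, far short of $\sqrt\alpha\approx 0.58$. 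The reason is that with such a truncation the per-edge weight attached to tree components in the expansion is not the sharp $s^2$, so one never reaches the radius of convergence $\alpha$ of the tree generating function; no amount of coupling between the truncation scale and the component sizes fixes this.

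The missing idea is to condition instead on an event about the \emph{intersection graph} $\mathcal H_{\pi_*}=\{(i,j):A_{i,j}=B_{\pi_*(i),\pi_*(j)}=1\}$, namely that it contains no cycles. This event has only \emph{constant} probability (not $1-o(1)$), which is acceptable because the theorem only claims $\operatorname{TV}=1-\Omega(1)$: one runs Cauchy--Schwarz against $\Pb(\mathcal A)=\Omega(1)$ rather than against a $1-o(1)$ event. This choice buys two things simultaneously. First, it forces the union $J$ of fully-present edge orbits in the second-moment expansion to be a forest, so the sum over isomorphism classes reduces to $\sum_{\mathbf J\ \mathrm{forest}}s^{2|E(\mathbf J)|}$, which converges precisely when $s^2<\alpha$ by Otter's asymptotic; without the forest restriction the sum ranges over all isomorphism classes of graphs with $m$ edges, whose number grows super-exponentially in $m$, and your ``geometric sum with ratio $\lambda s^2$'' does not tame it. Second, by FKG the event has positive probability exactly when $\sum_k(\lambda s^2)^k/2k<\infty$, i.e.\ when $\lambda s^2<1$ --- this is where the other branch of the $\min$ enters, not through cyclic components of the expansion. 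A further refinement you would also need is a separate, sharper estimate for the length-one edge orbits (fixed edges of $\Sigma$ not in $J$), whose multiplicative contribution $1+\tfrac{\lambda s^2(2s-1)}{n}$ per orbit must be cancelled against the factor $(1-\tfrac{\lambda s^2}{n})^{|\mathcal O_1(J)|}$ from the realization probability; a crude $1+O(n^{-1})$ bound on $n^2$ such orbits would destroy the $O(1)$ bound.
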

\begin{remark}{\label{rmk-previous-lower-bound}}
    We note that Result~(2) can already been derived by combining the results of \cite{DD23a} and \cite{MWXY21+}. Specifically, when $\lambda s^2>1$, in \cite{DD23a} the authors propose a statistic that achieves strong detection based on densest subgraphs. In addition, when $s^2>\alpha$ in \cite{MWXY21+} the authors propose a statistic that achieves strong detection based on counting trees. However, the best known lower bound, established in \cite{WXY23}, only shows that strong detection is information-theoretically impossible when $s<\min\{ \tfrac{1}{\sqrt{\lambda}}, 0.1 \}$. Thus, our result sharpens a constant factor to the previous best lower bound.
\end{remark}

\begin{remark}
    Our result can be easily adapted to the regime where $n^{-1+o(1)}\leq \lambda =o(1)$, where the lower bound is just given by $\sqrt{\alpha}$ (since $\tfrac{1}{\sqrt{\lambda}} \to \infty$ in this regime). This also matches the upper bound in \cite{MWXY21+} where they achieve strong detection above the Otter's threshold by counting trees.
\end{remark}

\subsection{Backgrounds and related works}

Graph matching refers to finding the vertex correspondence between two graphs such that the total number of common edges is maximized. It plays essential roles in various applied fields such as computational biology \cite{SXB08, VCL+15}, social networking \cite{NS08, NS09}, computer vision \cite{BBM05, CSS06} and natural language processing \cite{HNM05}. This problem is an instance of {\em Quadratic assignment problem} which is known to be NP-hard in the worst case, making itself an important and challenging combinatorial optimization problem. The correlated \ER graph model, introduced in \cite{PG11}, provides a canonical probabilistic model for studying graph matching problem in the average case. In recent years, many results have been obtained concerning this model with emphasis placed on the two important and entangling issues: the information threshold (i.e., the statistical threshold) and the computational phase transition. On one hand, the collaborative endeavors of the community, as evidenced in \cite{CK16, CK17, HM23, GML21, WXY22, WXY23, DD23a, DD23b, Du25+}, have led to significant developments on the understanding of information thresholds in correlation detection and vertex matching. On the other hand, continual advancements in algorithms for both problems are evident in works such as \cite{BCL+19, DMWX21, FMWX23a, FMWX23b, BSH19, CKMP19, DCKG19, MX20, GM20, GML20+, MRT21, MRT23, MWXY21+, GMS22+, MWXY23, DL22+, DL23+}. The current state of the art in algorithms can be summarized as follows: in the sparse regime, efficient matching algorithms are available when the correlation exceeds the square root of Otter’s constant (which is approximately 0.338) \cite{GML20+, GMS22+, MWXY21+, MWXY23}; in the dense regime, efficient matching algorithms exist as long as the correlation exceeds an arbitrarily small constant \cite{DL22+, DL23+}. Roughly speaking, the separation between the sparse and dense regimes mentioned above depends on whether the average degree grows polynomially or sub-polynomially. In addition, although it seems rather elusive to prove hardness on the graph matching problem for a typical instance under the assumption of P$\neq$NP, in \cite{DDL23+, Li25+} the authors presented evidences that the state-of-the-art algorithms indeed capture the correct computation thresholds based on the analysis of a specific class known as low-degree polynomial algorithms. 

We now turn our attention to the correlated \ER model in the constant-degree regime, where the average degree satisfies $\lambda=O(1)$. Compared to the extensively studied super-constant degree regime, our understanding of this regime remains relatively limited. Although various developments have been made by the community on both the information theoretic and the computational aspects of graph matching problem in this regime (see, e.g., \cite{HM23, GML21, GML20+, WXY23, GMS22+}), as discussed in Remark~\ref{rmk-previous-lower-bound} there is still a constant gap in determining the sharp threshold for strong detection in this model. Our result resolve this gap, and we will discuss some key innovations and interesting features revealed in this work below. 
\begin{itemize}
    \item {\bf Information-computation gap.} As shown in Theorem~\ref{MAIN-THM}, when $\lambda$ is a constant, strong detection between $\Pb$ and $\Qb$ is informationally possible if and only if $s>\min\{ \tfrac{1}{\sqrt{\lambda}}, \sqrt{\alpha} \}$. However, the best known efficient in \cite{MWXY21+} algorithms only achieves strong detection when $s>\sqrt{\alpha}$, suggesting the emergence of information-computation gap when $\lambda>\alpha^{-1}$. This is further supported by the result in \cite{DDL23+}, where the authors provide evidences that all algorithms based on \emph{low-degree polynomials} fails to strongly distinguish $\Pb$ and $\Qb$ when $s<\sqrt{\alpha}$. 
    \item {\bf Detection-recovery gap.} Another interesting feature is that partial recovery of the latent matching $\pi_*$ (i.e., produce an estimator $\widehat{\pi}$ that agrees with $\pi_*$ on a positive fraction of vertices) is only informationally possible when $s>\tfrac{1}{\sqrt{\lambda}}$, as shown in \cite{DD23b}. This yields that this model has detection-recovery gap when $\lambda<\alpha^{-1}$. Also, we note that when $\lambda<\alpha^{-1}$ the recovery threshold is \emph{a constant factor larger than} the detection threshold, which is unusual in high-dimensional statistics.
    \item {\bf Comparison to previous work \cite{DD23a}.} In a previous work \cite{DD23a}, the authors establish the sharp detection threshold for the detection problem when the average degree $\lambda=n^{1-c+o(1)}$ for some constant $0<c<1$. Our work will follow the basic approach in \cite{DD23a}, where the idea is to calculated the second moment of the likelihood ratio $\tfrac{\mathrm{d}\Pb}{\mathrm{d}\Qb}$ truncated on some good events. However, compared with \cite{DD23a} this work overcomes several technical issues that arise in the constant degree regime. Firstly, instead of truncate on a high-probability event we need to condition on an event that occurs with positive probability, which will enable a more precise enumeration on the number of graphs satisfying certain properties. Secondly, we need to analyze the ``short'' edge orbit more carefully to obtain a sharp strong-detection transition.
\end{itemize}

\subsection{Future directions}

A number of interesting directions remain open and we detail a few here.

\underline{Robust detection.} In this work we determine the sharp strong detection threshold between a pair of correlated \ER graphs and a pair of independent \ER graphs in the constant-degree regime. However, it remains an interesting question whether strong detection is still possible if we adversarially corrupt $o(n)$ edges. In fact, it is straightforward to check that when $s<\tfrac{1}{\sqrt{\lambda}}$, the densest subgraph statistics proposed in \cite{DD23a} is robust up to $o(n)$ adversary edge corruptions. However, when $\tfrac{1}{\sqrt{\lambda}}<s<\sqrt{\alpha}$ it is not known whether we can modify the tree counting statistics in \cite{MWXY21+} to obtain a robust detecting statistics.

\underline{Partially correlated graphs.} Proposed in \cite{HSY24}, another interesting variant for the correlated \ER model is the \ER graph model \emph{with partial vertex correspondence}, where the two graphs are only correlated on a hidden vertex subset. In \cite{HSY24}, the authors determine the detection threshold between this model and a pair of independent \ER graphs up to a constant factor. It remains an interesting question whether the techniques proposed in \cite{DD23a} and in this work might be helpful to sharpen this gap.

\underline{Other graph models.} Another important direction is to understand computational phase transitions for matching other important correlated random graph models, such as the random geometric graph model \cite{WWXY22, GL24+}, the random growing graph model \cite{RS22} and the stochastic block model \cite{RS21, CDGL24+}. We emphasize that it is also important to propose and study correlated graph models based on important real-world and scientific problems, albeit the models do not appear to be ``canonical'' from a mathematical point of view.

\subsection{Notations}

In this subsection, we record a list of notations that we shall use throughout the paper. Recall that $\Pb,\Qb$ are two probability measures on pairs of random graphs on $[n]=\{ 1,\ldots,n \}$. Denote $\Pb_*$ to be the joint law of $(\pi_*,G,A,B)$. Recall that we define $\operatorname{TV}(\mu,\nu)$ to be the total variational distance between two probability distributions $\mu$ and $\nu$, namely
\begin{equation}{\label{eq-def-TV-distance}}
    \operatorname{TV}(\mu,\nu) := \inf_{\gamma \in \Gamma(\mu,\nu)} \mathbb{P}_{(x,y)\sim\gamma} \big[ x \neq y \big] \,, 
\end{equation}
where $\Gamma(\mu,\nu)$ is the set of all couplings between $\mu$ and $\nu$. 

Throughout our discussions, we use $\mathcal K_n$ to denote the complete graph on vertex set $[n]$ and the notation $S \subset \mathcal K_n$ denotes a graph within $[n]$ which has \emph{no isolated vertex}. The vertex set and edge set of $S$ are represented as $V(S)$ and $E(S)$, respectively. Also, denote $\mathfrak S_n$ to be the set of all permutations on $[n]$. For a permutation $\pi\in \mathfrak S_n$, $\pi(S)$ represents the graph with vertex set $\{ \pi(v):v \in V(S) \}$ and edge set $\{(\pi(u),\pi(v)):(u,v) \in E(S)\}$. For $S,T \subset \mathcal K_n$, we say $S \subset T$ if $E(S) \subset E(T)$. Moreover, let $S \cup T \subset \mathcal K_n$ denote the graph induced by the edges in $E(S) \cup E(T)$. Additionally, we define $\mathcal H$ as the set of isomorphism classes of simple graphs with no isolated vertices. For each isomorphic class in $\mathcal H$, we identify it with a specific graph $\mathbf H$ in this class (we use the boldface font to indicate our emphasis on the isomorphism class). We denote by $\mathsf{Aut}(\mathbf H)$ the number of automorphisms of $\mathbf H$ and by $\mathsf{Sub}_n(\mathbf H)= \frac{n!}{(n-|V(\mathbf H)|)! \mathsf{Aut}(\mathbf H)}$ to be the number of inclusions of $\mathbf H$ into $\mathcal K_n$. In addition, for a set $\mathtt A$, we denote by both $|\mathtt A|$ and $\#\mathtt A$ its cardinality. The indicator function of an event $E$ is denoted as $\mathbf 1_{E}$.

For any two positive sequences $\{a_n\}$ and $\{b_n\}$, we write equivalently $a_n=O(b_n)$, $b_n=\Omega(a_n)$, $a_n\lesssim b_n$ and $b_n\gtrsim a_n$ if there exists a positive absolute constant $c$ such that $a_n/b_n\leq c$ holds for all $n$. We write $a_n=o(b_n)$, $b_n=\omega(a_n)$, $a_n\ll b_n$, and $b_n\gg a_n$ if $a_n/b_n\to 0$ as $n\to\infty$. We write $a_n=\Theta(b_n)$, if both $a_n=O(b_n)$ and $a_n=\Omega(b_n)$.

\section{Proof of the main theorem}

In this section we present the proof of Result~(1) of Theorem~\ref{MAIN-THM}. Throughout the remaining part of this paper, we may assume that
\begin{equation}{\label{eq-assumption-s}}
    \lambda^2 s < 1-\epsilon \mbox{ and } s <\sqrt \alpha-\epsilon  
\end{equation}
for a small constant $\epsilon \in (0,0.1)$. As discussed in the introduction, our proof relies on the calculation of the conditional second moment on a positive probability event. We first describe the event we will conditioned on. To this end, let 
\begin{equation}{\label{eq-def-intersection-graph}}
    \mathcal H_{\pi_*} := \Big\{ (i,j) \in \operatorname{U}_n: A_{i,j}= B_{\pi_*(i),\pi_*(j)}=1 \Big\} 
\end{equation}
be the intersection graph. Note that under $\Pb_*$, the distribution of $\mathcal H_{\pi_*}$ is just an \ER graph $\mathcal G(n,\tfrac{\lambda s^2}{n})$. 

\begin{lemma}{\label{lem-no-cycles}}
Let $\mathcal A$ be the event that there are no cycles in $H_{\pi^*}$, then there exist a constant $c=c(\epsilon,\lambda,s)>0$ that for any $n\geq 3$, we have $\Pb_*(\mathcal A)>c$.
\end{lemma}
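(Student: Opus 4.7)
The plan is to exploit the fact that, under $\Pb_*$, the intersection graph $\mathcal H_{\pi_*}$ is distributed exactly as an Erd\H{o}s--R\'enyi graph $\mathcal G(n,\tfrac{\lambda s^2}{n})$. Writing $c:=\lambda s^2$, the assumption~\eqref{eq-assumption-s} yields $c<1-\epsilon'$ for some $\epsilon'=\epsilon'(\epsilon,\lambda)>0$, so $\mathcal H_{\pi_*}$ sits in the strictly subcritical regime. In this regime it is classical that the number of cycles is asymptotically Poisson with finite mean, whence the probability of being cycle-free converges to a strictly positive constant---precisely what we need.

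Concretely, the first step is a first-moment count. Letting $X_k$ denote the number of $k$-cycles in $\mathcal H_{\pi_*}$,
\begin{equation*}
    \E_{\Pb_*}[X_k]=\binom{n}{k}\frac{(k-1)!}{2}\left(\frac{c}{n}\right)^{k}\le \frac{c^k}{2k},
\end{equation*}
so $\sum_{k=3}^{n}\E_{\Pb_*}[X_k]\le \mu:=\sum_{k\ge 3}\tfrac{c^k}{2k}=-\tfrac{1}{2}\log(1-c)-\tfrac{c}{2}-\tfrac{c^2}{4}<\infty$. The second step is to invoke the classical Poisson limit for short cycles in subcritical Erd\H{o}s--R\'enyi graphs (e.g.\ Theorem~4.9 in Bollob\'as' \emph{Random Graphs}): for each fixed $K\ge 3$, the joint law of $(X_3,\ldots,X_K)$ converges as $n\to\infty$ to independent Poissons with means $(c^k/(2k))_{3\le k\le K}$. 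Combining this with the Markov tail estimate $\Pb_*[\sum_{k>K}X_k\ge 1]\le \sum_{k>K}c^k/(2k)\to 0$ as $K\to\infty$, I would obtain $\liminf_{n\to\infty}\Pb_*[\mathcal A]\ge e^{-\mu}>0$.

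Consequently $\Pb_*[\mathcal A]\ge e^{-\mu}/2$ for all $n\ge n_0(\epsilon,\lambda,s)$, and for the finitely many remaining $n\in\{3,\ldots,n_0-1\}$ the trivial bound $\Pb_*[\mathcal A]\ge \Pb_*[\mathcal H_{\pi_*}=\emptyset]=(1-c/n)^{\binom{n}{2}}>0$ handles the last few cases. Taking the minimum of these finitely many strictly positive quantities yields a single positive constant $c(\epsilon,\lambda,s)>0$ valid uniformly in $n\ge 3$.

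The argument is essentially textbook and no single step is genuinely difficult. The main obstacle---mild---is whether to cite an external reference for the Poisson convergence or to prove it inline; the latter is a routine method-of-factorial-moments calculation in which the joint factorial moments of $(X_3,\ldots,X_K)$ converge to the appropriate product of Poisson parameters, with pairwise overlaps of cycles contributing only $o(1)$ because $c<1$.
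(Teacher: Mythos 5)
Your proof is correct, but it takes a genuinely different route from the paper. The paper also starts from the observation that $\mathcal H_{\pi_*}\sim\mathcal G(n,\tfrac{\lambda s^2}{n})$ is subcritical, but instead of invoking the Poisson limit theorem for cycle counts it applies the FKG inequality: the events $\{C\not\subset\mathcal H_{\pi_*}\}$, indexed by all cycles $C$ of $\mathcal K_n$, are decreasing with respect to the product measure on edge indicators, so $\Pb_*(\mathcal A)\geq \prod_{C}\bigl(1-(\tfrac{\lambda s^2}{n})^{|C|}\bigr)\geq \exp\bigl(-2\sum_{k\geq 3}(\lambda s^2)^k\bigr)$, an explicit non-asymptotic bound valid uniformly for all $n\geq 3$ with no case analysis. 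Your argument buys a sharper constant in the limit (indeed $e^{-\mu}$ with $\mu=\sum_{k\ge3}c^k/(2k)$ is the exact limiting value of $\Pb_*(\mathcal A)$, whereas FKG only gives a crude lower bound), but at the cost of citing or reproving the Poisson convergence of $(X_3,\dots,X_K)$ and of patching the finitely many small $n$ with the trivial empty-graph bound; since the lemma only needs \emph{some} positive constant, the FKG route is the more economical and self-contained one. One small point worth making explicit in your write-up: the standing assumption \eqref{eq-assumption-s} is $\lambda^2 s<1-\epsilon$ and $s<\sqrt\alpha-\epsilon$, from which $\lambda s^2<1-\epsilon'$ follows by splitting into the cases $\lambda\le 1$ (where $\lambda s^2\le s^2<\alpha$) and $\lambda>1$ (where $\lambda s^2<\lambda^2 s$); you assert this deduction but it deserves the one-line justification.
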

\begin{proof}
    Note that the number of $k$-cycles in $\mathcal K_n$ equals $\mathtt M_k=\mathtt M_k(n)=\frac{1}{2k} \prod_{i=1}^{n}(n-i+1)$. List all the $k$-cycles in $\mathcal K_n$ in an arbitrary but prefixed order $C^{(k)}_1,\ldots,C^{(k)}_{\mathtt M_k}$, and let
    \begin{equation*}
        \mathcal A^{(k)}_i := \Big\{ C^{(k)}_i \not \in \mathcal H_{\pi_*} \Big\} \mbox{ for } 1 \leq i \leq \mathtt M_k \,.
    \end{equation*}
    One can easily check that $\Pb_*(\mathcal A^{(k)}_{i})=1-(\tfrac{\lambda s^2}{n})^k$ and $\mathcal A_{i}^{(k)}$ are decreasing events. Thus, using FKG inequality we have
    \begin{align*}
        \Pb_*(\mathcal A) = \Pb_*\Big( \cap_{k=3}^{n} \cap_{i=1}^{\mathtt M_k} \mathcal A_{i}^{(k)} \Big)  \geq \prod_{k=3}^{n} \prod_{i=1}^{\mathtt M_k} \Pb_*\big( \mathcal A_{i}^{(k)} \big) = \prod_{k=3}^{n} \Big( 1-(\tfrac{\lambda s^2}{n})^k \big)^{\mathtt M_k}  \\
        \geq \exp\Big( -\sum_{k=3}^{n} \mathtt M_k \cdot (\tfrac{\lambda s^2}{n})^k \big) \geq \exp\Big( -2\sum_{k=3}^{n}(\lambda s^2)^k \Big)
        = e^{-\frac{2(\lambda s^2)^3}{1-\lambda s^2}} >0 \,.
    \end{align*} 
    Recall \eqref{eq-assumption-s}, by taking $c=e^{-\frac{2(\lambda s^2)^3}{\epsilon}}$ yields the desired result.
\end{proof}

Based on Lemma~\ref{lem-no-cycles}, the main goal in our proof is to show the following proposition.
\begin{proposition}{\label{prop-truncate-second-moment}}
    We have $\mathbb E_{\Pb}\Big[ \frac{\Pb(A,B)}{\Qb(A,B)} \cdot \mathbf 1_{\mathcal A} \Big]=O(1)$.
\end{proposition}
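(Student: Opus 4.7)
The plan is to carry out a conditional second moment computation following the framework of \cite{DD23a}. Writing the likelihood ratio as a mixture $\tfrac{\Pb(A,B)}{\Qb(A,B)}=\tfrac{1}{n!}\sum_{\sigma\in\mathfrak S_n}\tfrac{\Pb^\sigma(A,B)}{\Qb(A,B)}$, where $\Pb^\pi$ denotes the law of $(A,B)$ given $\pi_*=\pi$, we have
\[
\mathbb E_{\Pb}\Big[\tfrac{\Pb(A,B)}{\Qb(A,B)}\,\mathbf 1_{\mathcal A}\Big]
= \mathbb E_{\pi_*,\sigma}\,\mathbb E_{\Pb^{\pi_*}}\Big[\tfrac{\Pb^\sigma(A,B)}{\Qb(A,B)}\,\mathbf 1_{\mathcal A}\Big].
\]
By the vertex-relabeling symmetry of the model I would fix $\pi_*=\mathrm{id}$; under $\Pb^{\mathrm{id}}$ the pairs $\{(A_{ij},B_{ij})\}$ are i.i.d.\ correlated Bernoullis and $\mathbf 1_{\mathcal A}$ becomes the indicator that $\mathcal H_{\mathrm{id}}$ is a forest. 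Since $\tfrac{\Pb^\sigma}{\Qb}(A,B)=\prod_{(i,j)\in\operatorname{U}_n} L(A_{ij},B_{\sigma(i),\sigma(j)})$ factors edge-wise (with $L$ the per-edge likelihood ratio), the target reduces to bounding
\[
\frac{1}{n!}\sum_{\sigma\in\mathfrak S_n}\mathbb E_{\Pb^{\mathrm{id}}}\Big[\prod_{(i,j)} L(A_{ij},B_{\sigma(i),\sigma(j)})\,\mathbf 1_{\mathcal A}\Big] = O(1).
\]

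\textbf{Subgraph expansion and Otter's threshold.} I would then expand each factor $L$ around its null mean, so that the product becomes a sum indexed by a subset $S\subset\operatorname{U}_n$ of ``active'' pairs where the deviation $L-1$ is taken. Taking expectation under $\Pb^{\mathrm{id}}$ and using independence across distinct pairs, the contribution of a fixed $(\sigma,S)$ depends only on the isomorphism type $\mathbf H$ of the combined substructure formed by $S$, its $\sigma$-image, and $\mathcal H_{\mathrm{id}}$. Grouping by $\mathbf H$ and using $\mathsf{Sub}_n(\mathbf H)=n!/[(n-|V(\mathbf H)|)!\,\mathsf{Aut}(\mathbf H)]$ converts the $1/n!$-normalized sum into a series over isomorphism classes, whose coefficients involve powers of $s$ (from the $L-1$ expansion) and $\lambda$ (from the null density). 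The truncation $\mathbf 1_{\mathcal A}$ restricts the admissible classes to those whose ``intersection part'' is a forest, so the dominant piece is controlled by a generating function of the schematic form $\sum_{\mathbf T} s^{2|E(\mathbf T)|}/\mathsf{Aut}(\mathbf T)$ over unlabeled trees $\mathbf T$; this has radius of convergence exactly $\alpha$ by the definition of Otter's constant, and the hypothesis $s<\sqrt\alpha-\epsilon$ yields convergence. The complementary assumption from \eqref{eq-assumption-s} is intended to handle the residual ``dense'' terms arising when short $\sigma$-orbits meet several edges of $\mathcal H_{\mathrm{id}}$ simultaneously.

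\textbf{Main obstacle.} The hardest step is the careful treatment of \emph{short} $\sigma$-orbits on $\operatorname{U}_n$ (fixed pairs and $2$-cycles), which carry the dominant per-orbit weights and would cause the series to diverge absent the truncation. A key subtlety, flagged in the introduction, is that $\mathcal A$ is only a constant-probability event (see Lemma~\ref{lem-no-cycles}), not a high-probability one, so $\mathbf 1_{\mathcal A}$ cannot be removed up to a union-bound error; instead the expectation must be computed directly in the conditioned measure, which requires a precise enumeration of the forests that can appear as $\mathcal H_{\mathrm{id}}$. Achieving the sharp threshold, simultaneously matching $\sqrt\alpha$ and $1/\sqrt\lambda$ through these short-orbit contributions, is where I expect the bulk of the technical work to be concentrated.
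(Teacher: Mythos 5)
Your opening reduction (the mixture over permutations, the edge-wise factorization of the likelihood ratio, and fixing $\pi_*=\mathrm{id}$ by symmetry) matches the paper, and you correctly predict that the computation must terminate in a generating function over unlabeled trees whose radius of convergence is Otter's constant. But the core of the argument --- the precise mechanism by which $\mathbf 1_{\mathcal A}$ is exploited --- is exactly the part you defer, and the route you sketch for it (expanding each factor as $1+(L-1)$ and summing over ``active'' subsets $S$) is neither developed far enough to verify nor the route the paper takes. Two concrete problems with the sketch: the contribution of a pair $(\sigma,S)$ cannot ``depend only on the isomorphism type of the substructure formed by $S$, its $\sigma$-image, and $\mathcal H_{\mathrm{id}}$,'' since $\mathcal H_{\mathrm{id}}$ is random rather than part of the data indexing the sum; and the generating function you write, $\sum_{\mathbf T}s^{2|E(\mathbf T)|}/\mathsf{Aut}(\mathbf T)$, is an automorphism-weighted (i.e., labeled-tree) series whose radius of convergence is $1/e\approx 0.368$, not Otter's $\alpha\approx 0.338$; the correct terminal object is the unweighted sum $\sum_{\mathbf J}s^{2|E(\mathbf J)|}$ over unlabeled forests, so the $\mathsf{Aut}$ factors must cancel exactly for the threshold to land at $\sqrt\alpha$.

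The paper's actual mechanism, which your proposal would need to supply, is the orbit decomposition: $\Sigma=(\pi_*)^{-1}\circ\pi$ partitions $\operatorname{U}_n$ into edge orbits, the factors $\ell(A_e,B_{\Pi(e)})$ are independent across orbits, and one conditions on the realization $\mathcal J_\sigma=J$ of the set of orbits entirely contained in the intersection graph; on $\mathcal A$ this $J$ is a forest, which is the only way the truncation enters. Three quantitative inputs then close the argument: (i) an orbit $O\not\subset J$ has conditional expectation $(1+\rho^{2|O|}-s^{2|O|})/(1-(ps^2)^{|O|})$, which is $1+O(n^{-2})$ for $|O|\ge 2$ and $1+\lambda s^2(2s-1)/n+O(n^{-2})$ for fixed points --- this is the short-orbit analysis you flag but do not perform; (ii) $\Pb_*(\mathcal J_\sigma=J)\le(\lambda s^2/n)^{|E(J)|}(1-\lambda s^2/n)^{|\mathcal O_1(J)|}$, which against the deterministic factor $(n/\lambda)^{|E(J)|}$ contributed by the orbits inside $J$ yields $O(1)\cdot s^{2|E(J)|}$ per pair $(\sigma,J)$; and (iii) $\#\{\sigma:\sigma(J)=J\}=(n-|V(J)|)!\,\mathsf{Aut}(J)=n!/\mathsf{Sub}_n(J)$, which cancels both the $1/n!$ and the automorphism count and converts the double sum into $\sum_{\mathbf J}s^{2|E(\mathbf J)|}$ over unlabeled forests, convergent for $s^2<\alpha-\epsilon$ by Otter's asymptotic $f_k=O(\alpha^{-k}k^{-3/2})$. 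Note also that the first condition in \eqref{eq-assumption-s} is not used to ``handle residual dense terms'' in this proposition; it is used only in Lemma~\ref{lem-no-cycles} to guarantee $\Pb(\mathcal A)=\Omega(1)$ when the proposition is converted into a total-variation bound. As written, your proposal identifies the destination and the obstacles but does not contain the proof.
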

We can now finish the proof of Result~(1), Theorem~\ref{MAIN-THM} based on Proposition~\ref{prop-truncate-second-moment}.
\begin{proof}[Proof of Result~(1), Theorem~\ref{MAIN-THM} assuming Proposition~\ref{prop-truncate-second-moment}]
    Note that
    \begin{align*}
        1-\operatorname{TV} (\Pb,\Qb) &= \mathbb E_{(A,B)\sim \Pb} \Big[ \min\Big\{ \tfrac{\Qb(A,B)}{\Pb(A,B)},1 \Big\} \Big] \geq \mathbb E_{(A,B)\sim\Pb} \Big[ \mathbf{1}_{\mathcal A} \cdot \min\Big\{ \tfrac{\Qb(A,B)}{\Pb(A,B)},1 \Big\} \Big]  \\
        &= \mathbb E_{(A,B)\sim \Pb} \Big[ \mathbf{1}_{\mathcal A \cap \{ \Pb(A,B)<\Qb(A,B) \} } + \mathbf{1}_{\mathcal A \cap \{ \Pb(A,B)\geq \Qb(A,B) \} } \cdot \tfrac{\Qb(A,B)}{\Pb(A,B)} \Big]  \,.
    \end{align*}
    Using Cauchy-Schwartz inequality, we get that
    \begin{align*}
        \mathbb E_{(A,B)\sim \Pb} \Big[ \mathbf{1}_{\mathcal A \cap \{ \Pb(A,B)\geq \Qb(A,B) \} } \cdot \tfrac{\Qb(A,B)}{\Pb(A,B)} \Big] &\geq \frac{ \Pb\big( \mathcal A \cap \{ \Pb(A,B) \geq \Qb(A,B) \} \big)^2 }{ \mathbb{E}_{(A,B)\sim \Pb} \big[ \frac{\Pb(A,B)}{\Qb(A,B)} \mathbf{1}_{\mathcal A} \big] } \\
        &\geq \Omega(1) \cdot \Pb\big( \mathcal A \cap \{ \Pb(A,B) \geq \Qb(A,B) \} \big)^2 \,,
    \end{align*}
    where the second inequality follows from Proposition~\ref{prop-truncate-second-moment}.
    Thus, we have 
    \begin{align*}
        1-\operatorname{TV}(\Pb,\Qb) &\geq \Omega(1) \cdot \Bigg( \Pb\Big( \mathcal A \cap \{ \Pb(A,B)< \Qb(A,B) \} \Big) + \Pb\Big( \mathcal A \cap \{ \Pb(A,B) \geq \Qb(A,B) \} \Big)^2 \Bigg) \\
        &\geq \Omega(1) \cdot \min\Big\{ \frac{\Pb(\mathcal A)}{2},\frac{\Pb(\mathcal A)^2}{4} \Big\} = \Omega(1) \,, 
    \end{align*}
    where the second inequality follows from 
    \begin{align*}
        \Pb\Big( \mathcal A \cap \{ \Pb(A,B)< \Qb(A,B) \} \Big) + \Pb\Big( \mathcal A \cap \{ \Pb(A,B) \geq \Qb(A,B) \} \Big) = \Pb(\mathcal A) \,.
    \end{align*}
    Using Lemma~\ref{lem-no-cycles}, there exist a constant $c>0$ that $\Pb(A)>c$ and Thus we have finished the proof.
\end{proof}

The rest part of this section is devoted to the proof of Proposition~\ref{prop-truncate-second-moment}. To this end, denote 
\begin{equation}{\label{eq-def-p-rho}}
    p=\frac{\lambda}{n} \mbox{ and } \rho = \frac{s(1-p)}{1-ps}
\end{equation}
for notational convenience. It is clear that the likelihood ratio $L(A,B)=\frac{\Pb(A,B)}{\Qb(A,B)}$ satisfies that
\begin{align}
    L(A,B) = \frac{1}{n!} \sum_{\pi\in\mathfrak S_n} \frac{\Pb_{(\cdot \mid \pi )}(A,B)}{\Qb(A,B)} = \frac{1}{n!} \sum_{\pi\in\mathfrak S_n} \prod_{(i,j) \in \operatorname{U}_n} \ell(A_{i,j}, B_{\pi(i), \pi(j)}) \,, \label{eq-likelihood-ratio}
\end{align}
where $\ell : \{0,1\} \times \{0,1\} \to \mathbb{R}$ denotes the likelihood ratio function for a pair of correlated edges, given by
\begin{align}{\label{eq-def-ell}}
    \ell(x,y) =
    \begin{cases}
    \frac{1-2ps+ps^2}{(1-ps)^2} \,, & \mbox{if } x=y=0 \,; \\
    \frac{1-s}{1-ps} \,, & \mbox{if } x=1,y=0 \mbox{ or } x=0,y=1 \,; \\
    \frac{1}{p} \,, & \mbox{if } x=y=1 \,.
    \end{cases} 
\end{align}
For any $\pi \in \mathfrak S_n$, define a permutation on $[n]$ by 
\begin{equation}{\label{eq-def-sigma}}
    \sigma = (\pi^{*})^{-1} \circ \pi \,.
\end{equation}
Then $\pi$ (respectively, $\sigma$) induces a bijection $\Pi$ on $\operatorname{U}_n$ (respectively, a bijection $\Sigma$ on $\operatorname{U}_n$), given by $\Pi(i,j) = (\pi(i), \pi(j))$ (respectively, $\Sigma(i,j)) = (\sigma(i), \sigma(j))$. Given $\sigma\in\mathfrak S_n$, for every $e \in \operatorname{U}_n$, we define the orbit generated by $e$ as (below we write $\Sigma^k(e)=\Sigma(\Sigma^{k-1}(e))$ for $e \in \operatorname{U}_n$)
\begin{equation}{\label{eq-def-O-sigma,e}}
    O_{\sigma,e} = \Big\{ \Sigma^k(e): k \in \mathbb{N} \Big\} \,.
\end{equation}
With a slight abuse of notations, we will also regard $O_{\sigma,e}$ as a graph in $\mathcal K_n$ induces by all the edges in $O_{\sigma,e}$.
Note that $\Sigma$ induces an equivalent relation over $\operatorname{U}_n$, i.e., we say $e,e' \in \operatorname{U}_n$ are equivalent if they belong to the same orbit. This equivalent relation decompose $\operatorname{U}_n$ into several equivalent classes, which we denote as $\mathcal E_{\sigma}$, where each $[e] \in \mathcal E_{\sigma}$ represents the equivalent class of $e$. Let  
\begin{equation}{\label{eq-def-O-sigma}}
    \mathcal O_{\sigma}= \Big\{ O_{\sigma,e}: [e] \in \mathcal E_{\sigma} \Big\}
\end{equation}
be the set of edge orbits induced by $\sigma$ (note that $\mathcal O_{\sigma}$ is completely determined by $\sigma$). In addition, define
\begin{equation}
    \mathcal J_\sigma = \{ O \in \mathcal O_\sigma : A_{i,j} = B_{\pi_*(i), \pi_*(j)} = 1 \mbox{ for all } (i,j) \in O \}
\end{equation}
to be the set of edge orbits that are entirely contained in $\mathcal H_{\pi^*}$. It is clear that once $\pi_*$ and $\sigma$ are fixed, the collections of $\{ \ell(A_e,B_{\Pi(e)}): e\in O \}$ are mutually independent for $O\in \mathcal O_{\sigma}$. In addition, we say a graph $J \in \mathcal K_n$ is a \emph{legal realization} of $\mathcal J_{\sigma}$ (denoted as $J {\ltimes} \sigma$), if there exists a subset $\widetilde{\mathcal E}_{\sigma} \subset \mathcal E_{\sigma}$ such that
\begin{align*}
    J = \bigcup_{ [e] \in \widetilde{\mathcal E}_{\sigma} } O_{\sigma,e} \,.
\end{align*}
We first prove the following lemma (similar to \cite[Proposition~3]{WXY23} and \cite[lemma~3.1]{DD23a}) which controls contribution from $O \not \in \mathcal J_{\sigma}$. However, we need to treat the ``short'' orbits (i.e., when $|O|$ is small) more carefully to get a sharper control over the whole region $s\in (0,1)$ (rather than $s$ is a sufficiently small constant as in \cite[Proposition~3]{WXY23} and \cite[lemma~3.1]{DD23a}).

\begin{lemma}{\label{lem-orbit-expectation}}
Recall \eqref{eq-def-p-rho}. For any $\sigma\in\mathfrak S_n$, let $\pi = \pi^* \circ \sigma$ and let $\Pi,\Sigma$ be the bijection on $\operatorname{U}_n$ induced by $\pi,\sigma$, respectively. Then for any legal realization $J$ of $\mathcal J_{\sigma}$ and any orbit $O \in \mathcal O_{\sigma}, O \not \subset J$, we have
\begin{equation}{\label{eq-bound-incomplete-orbit}}
    \mathbb{E}_{(A,B) \sim \Pb(\cdot \mid \pi_*)} \Big[ \prod_{e \in O} \ell(A_e, B_{\Pi(e)}) \mid \mathcal J_{\sigma}=J \Big] = \frac{ 1+\rho^{2|O|}-s^{2|O|} }{ 1-(ps^2)^{|O|} } \,.
\end{equation}
In particular, we have
\begin{equation}{\label{eq-bound-incomplete-orbit-plus}}
    \mathbb{E}_{(A,B) \sim \Pb(\cdot \mid \pi_*)} \Big[ \prod_{e \in O} \ell(A_e, B_{\Pi(e)}) \mid \mathcal J_{\sigma}=J \Big] \leq 
    \begin{cases}
        1+O(n^{-2}) \,, & |O| \geq 2 \,; \\
        1+\frac{\lambda s^2(2s-1)}{n} + O(n^{-2}) \,, & |O|=1 \,.
    \end{cases}
\end{equation}
\end{lemma}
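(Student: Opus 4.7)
The plan is to reduce the conditioning to a single constraint on $O$, then carry out an algebraic computation around the orbit via a centered expansion of $\ell$, and finally extract the asymptotic bounds from the resulting closed form.

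\textbf{Step 1: Reducing the conditioning.} Since distinct orbits in $\mathcal O_\sigma$ are disjoint subsets of $\operatorname{U}_n$, and under $\Pb(\cdot \mid \pi_*)$ the variables $\{I_e, J_e, K_e : e \in \operatorname{U}_n\}$ are jointly independent, the events $\{O' \subset \mathcal H_{\pi_*}\}$ for distinct orbits $O'$ are mutually independent. Conditioning on $\mathcal J_\sigma = J$ therefore factorizes orbit by orbit, and since $J$ is a legal realization with $O \not\subset J$, the only constraint relevant to $O$ is $\{O \not\subset \mathcal H_{\pi_*}\}$. I may thus replace the conditioning on $\{\mathcal J_\sigma = J\}$ by conditioning on $\{O \not\subset \mathcal H_{\pi_*}\}$, and write the result as the unconditional expectation minus the contribution on $\{O \subset \mathcal H_{\pi_*}\}$, divided by $\Pb(O \not\subset \mathcal H_{\pi_*} \mid \pi_*) = 1 - (ps^2)^{|O|}$ (each edge belongs to $\mathcal H_{\pi_*}$ independently with probability $ps^2$).

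\textbf{Step 2: Cyclic computation of the unconditional expectation.} Setting $q = ps$, the key algebraic identity to verify case-by-case from \eqref{eq-def-ell} using $r - q^2 = q\rho(1-q)$ (with $r = ps^2$) is
\begin{equation*}
\ell(x, y) = 1 + \rho\,\xi(x)\xi(y), \qquad \xi(z) := \tfrac{z - q}{\sqrt{q(1-q)}}.
\end{equation*}
Enumerate $O = \{e_0, \dots, e_{m-1}\}$ cyclically with $e_{k+1} = \sigma(e_k)$ and $m = |O|$, and set $\xi^A_k = \xi(A_{e_k})$, $\xi^B_k = \xi(B_{\pi_*(e_k)})$, so $\xi(B_{\Pi(e_k)}) = \xi^B_{k+1}$. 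Expanding,
\begin{equation*}
\prod_{k=0}^{m-1} \ell(A_{e_k}, B_{\Pi(e_k)}) = \sum_{S \subset \mathbb Z/m\mathbb Z} \rho^{|S|} \prod_{k \in S} \xi^A_k \xi^B_{k+1}.
\end{equation*}
Since the triples $(I_{e_j}, J_{e_j}, K_{e_j})$ are independent across $j$, the expectation factors edge-by-edge: the $j$-th factor is $\mathbb E[(\xi^A_j)^{a_j}(\xi^B_j)^{b_j}]$ with $a_j = \mathbf 1_{j \in S}$, $b_j = \mathbf 1_{j-1 \in S}$, and a direct calculation gives $\mathbb E[\xi^A_j] = \mathbb E[\xi^B_j] = 0$ and $\mathbb E[\xi^A_j \xi^B_j] = \rho$. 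The $j$-th factor thus vanishes unless $a_j = b_j$; imposing this cyclically for all $j$ forces either $S = \emptyset$ or $S = \mathbb Z/m\mathbb Z$, so $\mathbb E_{\Pb(\cdot \mid \pi_*)}\big[\prod_k \ell(A_{e_k}, B_{\Pi(e_k)})\big] = 1 + \rho^m \cdot \rho^m = 1 + \rho^{2m}$. On $\{O \subset \mathcal H_{\pi_*}\}$ every factor equals $\ell(1,1) = 1/p$, so the restricted expectation is $p^{-m}(ps^2)^m = s^{2m}$. Combining with Step~1 yields \eqref{eq-bound-incomplete-orbit}.

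\textbf{Step 3: Asymptotic bounds.} Since $s < 1$, $\rho = s(1-p)/(1-ps) < s$, so $\rho^{2m} - s^{2m} \leq 0$ and the numerator of \eqref{eq-bound-incomplete-orbit} is at most $1$. For $m \geq 2$ the denominator satisfies $1 - (ps^2)^m \geq 1 - (\lambda s^2)^2/n^2$, so the ratio is at most $1 + O(n^{-2})$. For $m = 1$, Taylor-expanding $\rho^2 = s^2 - 2ps^2(1-s) + O(n^{-2})$ and $(1 - ps^2)^{-1} = 1 + ps^2 + O(n^{-2})$ produces $\rho^2 - s^2 + ps^2 = \lambda s^2(2s-1)/n + O(n^{-2})$, which gives \eqref{eq-bound-incomplete-orbit-plus}.

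The main obstacle is Step~2, where I must correctly identify that the cyclic expansion in the centered basis leaves only the empty and full subsets after edge-by-edge factorization, producing the clean closed form $1 + \rho^{2m}$. Retaining the $-s^{2m}$ correction in the numerator (rather than the coarser bound $1/(1-(ps^2)^m)$ used in \cite{WXY23, DD23a}) is precisely what yields the sharp $1 + \lambda s^2(2s-1)/n + O(n^{-2})$ control for $m = 1$; the sign of $2s - 1$ is what distinguishes the constant-degree regime from the sparser regimes considered previously and is essential for producing the sharp information-theoretic threshold.
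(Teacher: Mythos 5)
Your proposal is correct and follows essentially the same route as the paper: reduce the conditioning to the single event $\{O \not\subset \mathcal H_{\pi_*}\}$ by orbit-wise independence, compute the unconditional expectation as $1+\rho^{2|O|}$, subtract the all-ones contribution $s^{2|O|}$, divide by $1-(ps^2)^{|O|}$, and Taylor-expand for $|O|=1$. The only (harmless) difference is in the middle step, where the paper changes measure to $\Qb$ and invokes the trace identity $\mathrm{Tr}(\mathcal L^{2|O|})=1+\rho^{2|O|}$ citing \cite{WXY23}, whereas you re-derive the same value self-containedly via the rank-one decomposition $\ell(x,y)=1+\rho\,\xi(x)\xi(y)$ and the cyclic cancellation argument.
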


\begin{proof}
Note that $\{ \ell(A_e,B_{\Pi(e)}): e \in O \}$ are independent for $O \in \mathcal O_{\sigma}$. Thus, we have for $O \not \subset J$
\begin{align*}
    \mathbb{E}_{(A,B) \sim \Pb(\cdot \mid \pi_*)} \Big[ \prod_{e \in O} \ell(A_e, B_{\Pi(e)}) \mid \mathcal J_{\sigma}=J \Big] = \mathbb{E}_{(A,B) \sim \Pb(\cdot \mid \pi_*)} \Big[ \prod_{e \in O} \ell(A_e, B_{\Pi(e)}) \mid O \not \subset \mathcal J_{\sigma} \Big]
\end{align*}
We first compute the expectation without conditioning using
\begin{align*}
    \mathbb{E}_{(A,B) \sim \Pb(\cdot \mid \pi_*)} \Big[ \prod_{e \in O} \ell(A_e, B_{\Pi(e)}) \Big] &= \mathbb E_{(A,B) \sim \Qb} \Big[ \frac{ \Pb(A,B \mid \pi_*) }{ \Qb(A,B) } \cdot \prod_{e \in O} \ell(A_e, B_{\Pi(e)}) \Big] \\
    &= \mathbb{E}_{(A,B) \sim \Qb} \big[ \prod_{e \in O} \ell(A_e, B_{\Pi(e)}) \ell(A_{\Sigma(e)}, B_{\Pi(e)}) \Big] \,.
\end{align*}
The right-hand side above can be further interpreted as \( \mathrm{Tr}(\mathcal L^{2|O|}) \), where \( \mathcal L \) is the integral operator on the space of real functions on \( \{0, 1\} \) induced by the kernel \( \ell(\cdot,\cdot) \) in \eqref{eq-def-ell} as
\begin{align*}
    (\mathcal Lf)(x) = \mathbb{E}_{B_e \sim \Qb}[\ell(x, B_e) f(B_e)]
\end{align*}
The matrix form of \( \mathcal L \) is given by \( M(x,y) = \ell(x, y) \cdot \Qb[A_e = y] \) for \( (x, y) \in \{0, 1\} \times \{0, 1\} \), which can be written explicitly as
\[
M =
\begin{pmatrix}
\frac{1 - 2ps + ps^2}{(1 - ps)^2} & \frac{1 - s}{1 - ps} \\
\frac{s}{1 - ps} & \frac{p}{1 - ps}
\end{pmatrix}.
\]
Straightforward calculation yields that \( M \) has two eigenvalues \( 1 \) and \( \rho \). Thus, the expectation without indicator function equals \( 1 + \rho^{2|O|} \) (see \cite[Proposition~1]{WXY23} for details). In addition, it is clear that $\Pb_{*}(O \not \subset \mathcal J_{\sigma})=1-(ps^2)^{|O|}$. Since the condition only excludes the case where \( A_e = B_{\Pi(e)} = 1 \) for all \( e \in O \), we then get that
\begin{equation*}
    \mathbb{E}_{ (A,B) \sim \Pb(\cdot \mid \pi^*) } \Big[ \prod_{e \in O} \ell(A_e,B_{\Pi(e)}) \mid \mathcal J_{\sigma}=J  \Big] = \frac{ 1 + \rho^{2|O|} - s^{2|O|} }{ 1-(ps^2)^{|O|} }  \,, 
\end{equation*}
leading to \eqref{eq-bound-incomplete-orbit}. Based on \eqref{eq-bound-incomplete-orbit}, we have when $|O| \geq 2$ (recalling \eqref{eq-def-p-rho})
\begin{align*}
    \mathbb{E}_{ (A,B) \sim \Pb(\cdot \mid \pi^*) } \Big[ \prod_{e \in O} \ell(A_e,B_{\Pi(e)}) \mid \mathcal J_{\sigma}=J  \Big] \leq \frac{1}{1-O(n^{-2})} = 1+O(n^{-2}) \,,
\end{align*}
and when $|O|=1$
\begin{align*}
    & \mathbb{E}_{ (A,B) \sim \Pb(\cdot \mid \pi^*) } \Big[ \prod_{e \in O} \ell(A_e,B_{\Pi(e)}) \mid \mathcal J_{\sigma}=J  \Big] = \frac{1+\rho^2-s^2}{1-ps^2} \\
    =\ & \Big( 1-s^2 + s^2 (1-\tfrac{\lambda}{n})^2(1+\tfrac{\lambda s}{n} + O(n^{-2}))^2 \Big) \Big( 1+\tfrac{\lambda s^2}{n} + O(n^{-2}) \Big) = 1+\tfrac{\lambda s^2(2s-1)}{n} + O(n^{-2}) \,,
\end{align*}
leading to \eqref{eq-bound-incomplete-orbit-plus}.
\end{proof}

We now give the bound on the probability that $\Pb_*(\mathcal J_{\sigma}=J)$.
\begin{lemma}{\label{lem-prob-realization}}
    Denote $\mathcal O_1(J)=\{ O \in \mathcal O_{\sigma}: |O|=1, O \not\subset J \}$. We have 
    \begin{equation}
        \Pb_{*}(\mathcal J_{\sigma}=J) \leq \big( \tfrac{\lambda s^2}{n} \big)^{|E(J)|} \cdot \big( 1-\tfrac{\lambda s^2}{n} \big)^{|\mathcal O_1(J)|} \,.
    \end{equation}
\end{lemma}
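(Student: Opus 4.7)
The plan is to reduce the claim to a routine independence computation, exploiting two structural facts. First, under $\Pb_*$ the intersection graph $\mathcal H_{\pi_*}$ is distributed as $\mathcal G(n,\tfrac{\lambda s^2}{n})$, so the edge indicators $\mathbf 1_{\{e \in \mathcal H_{\pi_*}\}}$ for $e \in \operatorname{U}_n$ are i.i.d.\ Bernoulli with parameter $ps^2 = \tfrac{\lambda s^2}{n}$. Second, by definition the orbits in $\mathcal O_\sigma$ form a partition of $\operatorname{U}_n$, so for distinct orbits $O,O' \in \mathcal O_\sigma$ the events $\{O \subset \mathcal H_{\pi_*}\}$ and $\{O' \subset \mathcal H_{\pi_*}\}$ depend on disjoint edge sets and are therefore independent.

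With these two ingredients in hand, I would decompose the event $\{\mathcal J_\sigma = J\}$ as the intersection, taken over $O \in \mathcal O_\sigma$, of $\{O \subset \mathcal H_{\pi_*}\}$ for orbits $O \subset J$ and $\{O \not\subset \mathcal H_{\pi_*}\}$ for orbits $O \not\subset J$. Here it is crucial that $J$ is a \emph{legal realization}: since $J$ is a union of entire orbits, each $O \in \mathcal O_\sigma$ is either fully contained in $J$ or edge-disjoint from $J$, so the above dichotomy exhausts all orbits. Using independence, this gives the exact factorization
\begin{equation*}
    \Pb_{*}(\mathcal J_\sigma = J) = \prod_{O \subset J} (ps^2)^{|O|} \cdot \prod_{O \not\subset J} \bigl(1-(ps^2)^{|O|}\bigr).
\end{equation*}

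Finally, I would collapse the first product using the fact that legal realizations are disjoint unions of whole orbits, so $\sum_{O \subset J}|O| = |E(J)|$; this yields $(ps^2)^{|E(J)|} = (\tfrac{\lambda s^2}{n})^{|E(J)|}$. In the second product I would split off the size-one orbits, each of which contributes the exact factor $1 - \tfrac{\lambda s^2}{n}$ and whose number is $|\mathcal O_1(J)|$ by definition; the remaining orbits (of size $\geq 2$) contribute factors $1 - (ps^2)^{|O|} \leq 1$ which I simply bound by $1$ and discard. This produces the stated upper bound.

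No serious obstacle is anticipated: the entire argument rests on the independence of edges in $\mathcal H_{\pi_*}$ together with the orbit partition of $\operatorname{U}_n$. The only point requiring a moment of care is the equivalence, under the assumption that $J$ is a legal realization, between ``$O \not\subset J$'' and ``$O \cap J = \emptyset$'', which is immediate from the definition via $\widetilde{\mathcal E}_\sigma$.
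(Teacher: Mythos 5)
Your proof is correct and follows essentially the same route as the paper: factorize over the orbit partition using the independence of edge indicators of $\mathcal H_{\pi_*}$, identify $\sum_{O\subset J}|O|=|E(J)|$, and keep only the size-one factors among the non-contained orbits. The only cosmetic difference is that you first write the exact product over all orbits and then discard the factors $1-(ps^2)^{|O|}\le 1$ for $|O|\ge 2$, whereas the paper drops those constraints at the outset and states the inequality directly.
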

\begin{proof}
    Denote $\mathcal O_{\mathsf{full}}(J) = \{ O \in \mathcal O_{\sigma}: O \subset J \}$. Note that $J = \cup_{O \in \mathcal O_{\mathsf{full}}} O$. Recall that $\{ A_{e}, B_{\Pi(e)}: e \in O \}$ are mutually independent for $O \in \mathcal O_{\sigma}$. We have
    \begin{align*}
        \mathbb P_{*}( \mathcal J_{\sigma} = J ) &\leq \prod_{O \in \mathcal O_{\mathsf{full}}(J)} \mathbb P_{*}(O \subset \mathcal J_{\sigma}) \prod_{O \in \mathcal O_1(J)} \mathbb P_{*}(O \not\subset \mathcal J_{\sigma}) \\
        &= \prod_{O \in \mathcal O_{\mathsf{full}}(J)} \big( \tfrac{\lambda s^2}{n} \big)^{|O|} \cdot \prod_{O \in \mathcal O_1(J)} \big( 1- \tfrac{\lambda s^2}{n} \big) = \big( \tfrac{\lambda s^2}{n} \big)^{|E(J)|} \cdot \big( 1-\tfrac{\lambda s^2}{n} \big)^{|\mathcal O_1(J)|} \,,
    \end{align*}
    where the last equality follows from
    \begin{equation*}
        \sum_{ O \in \mathcal O_{\mathsf{full}}(J) } |O| = |E(J)| \,. \qedhere
    \end{equation*}
\end{proof}

We now move to the proof of Proposition~\ref{prop-truncate-second-moment}. Recall \eqref{eq-likelihood-ratio}. Note that 
\begin{align}
    \mathbb{E}_{(A,B)\sim \Pb} \Bigg[ \mathbf{1}_{\mathcal{A}} \cdot \frac{\Pb(A,B)}{\Qb(A,B)} \Bigg] &= \frac{1}{n!} \sum_{\pi \in \mathfrak S_n} \mathbb{E}_{(A,B)\sim \Pb(\cdot \mid \pi_*)} \Bigg[ \mathbf{1}_{\mathcal{A}} \cdot \prod_{e \in \operatorname{U}_n} \ell(A_e,B_{\Pi(e)}) \Bigg] \nonumber \\
    &=\frac{1}{n!} \sum_{\sigma \in \mathfrak S_n} \mathbb{E}_{(A,B)\sim \Pb(\cdot \mid \pi_*)} \Bigg[ \mathbf{1}_{\mathcal{A}} \cdot \prod_{O \in \mathcal O_{\sigma}} \prod_{e \in O} \ell(A_e,B_{\Pi(e)}) \Bigg]  \,, \label{eq-second-moment-relax-1}
\end{align}
where in the second inequality we use the fact that there is a one-to-one correspondence between $\pi\in\mathfrak S_n$ and $\sigma(\pi) = \pi^{-1} \circ \pi_*$. Recall that $\mathcal{A}$ implies there are no cycles in intersection graph $\mathcal H_{\pi^*}$ and $J_{\sigma}\in \mathcal H_{\pi^*}$. Therefore under $\mathcal A$ the possible realization of $\mathcal J_{\sigma}$ is a forest in $\mathcal K_n$. We then calculate \eqref{eq-second-moment-relax-1} based on the realization $\mathcal J_{\sigma}=J$. Note that $J$ must be a legal realization with respect to $\sigma$. Thus, we have
\begin{align}
    \eqref{eq-second-moment-relax-1} &= \frac{1}{n!} \sum_{\sigma \in \mathfrak S_n} \sum_{ \substack{ J \subset \mathcal K_n: J \ltimes \sigma \\ J \text{ is a forest}} } \mathbb{E}_{(A,B)\sim \Pb(\cdot \mid \pi_*)} \Bigg[ \mathbf{1}_{\mathcal{A}\cap \{\mathcal J_{\sigma}=J\}} \cdot \prod_{O \in \mathcal O_{\sigma}} \prod_{e \in O} \ell(A_e,B_{\Pi(e)}) \Bigg] \nonumber \\
    &\leq \frac{1}{n!} \sum_{\sigma \in \mathfrak S_n} \sum_{ \substack{ J \subset \mathcal K_n: J \ltimes \sigma \\ J \text{ is a forest} } } \mathbb{E}_{(A,B)\sim \Pb(\cdot \mid \pi_*)} \Bigg[ \mathbf{1}_{ \{\mathcal J_{\sigma}=J\} } \cdot \prod_{O \in \mathcal O_{\sigma}} \prod_{e \in O} \ell(A_e,B_{\Pi(e)}) \Bigg] \,. \label{eq-second-moment-relax-2}
\end{align}
Clear we have
\begin{align}
    & \mathbb{E}_{(A,B)\sim \Pb(\cdot \mid \pi_*)} \Bigg[ \mathbf{1}_{ \{\mathcal J_{\sigma}=J\} } \cdot \prod_{O \in \mathcal O_{\sigma}} \prod_{e \in O} \ell(A_e,B_{\Pi(e)}) \Bigg] \nonumber \\
    =\ & \Pb\big( \mathcal J_{\sigma}=J \mid \pi_* \big) \cdot \mathbb{E}_{(A,B)\sim \Pb(\cdot \mid \pi_*)} \Bigg[ \prod_{O \in \mathcal O_{\sigma}} \prod_{e \in O} \ell(A_e,B_{\Pi(e)}) \mid \mathcal J_{\sigma}=J \Bigg] \label{eq-second-moment-relax-3} \,.
\end{align}
Using Lemmas~\ref{lem-orbit-expectation}, we have (note that $|\mathcal O_{\sigma}| \leq n^2$)
\begin{align}
    & \mathbb{E}_{(A,B)\sim \Pb(\cdot \mid \pi_*)} \Bigg[ \prod_{O \in \mathcal O_{\sigma}} \prod_{e \in O} \ell(A_e,B_{\Pi(e)}) \mid \mathcal J_{\sigma}=J \Bigg] \nonumber \\
    =\ & \prod_{O \in \mathcal O_{\mathsf{full}} } \big( \tfrac{n}{\lambda} \big)^{|O|} \cdot \prod_{O \in \mathcal O_{1}(J) } \Big(1+ \tfrac{\lambda s^2(2s-1)}{n} + O(n^{-2}) \Big) \prod_{ O \in \mathcal O_{\sigma} \setminus (\mathcal O_{\mathsf{full}} \cup \mathcal O_{1}(J)) } (1+O(n^{-2})) \nonumber \\
    =\ & O(1) \cdot \big( \tfrac{n}{\lambda} \big)^{|E(J)|} \cdot \Big( 1+\tfrac{\lambda s^2(2s-1)}{n} \Big)^{|\mathcal O_1 (J)|} \,. \label{eq-second-moment-relax-4}
\end{align}
Plugging \eqref{eq-second-moment-relax-4} and Lemma~\ref{lem-prob-realization} into \eqref{eq-second-moment-relax-3}, we have
\begin{align}
    \eqref{eq-second-moment-relax-3} &\leq O(1) \cdot \big( \tfrac{\lambda s^2}{n} \big)^{|E(J)|} \big( 1-\tfrac{\lambda s^2}{n} \big)^{|\mathcal O_1 (J)|} \cdot \big( \tfrac{n}{\lambda} \big)^{|E(J)|} \Big(1+ \tfrac{\lambda s^2(2s-1)}{n} \Big)^{|\mathcal O_1 (J)|} \nonumber \\
    &\leq O(1) \cdot s^{2|E(J)|} \,. \label{eq-second-moment-relax-5}
\end{align}
Plugging \eqref{eq-second-moment-relax-5} into \eqref{eq-second-moment-relax-2}, note that $J \ltimes \sigma$ is equivalent to $\Sigma(J)=J$, we get that
\begin{align}
    \eqref{eq-second-moment-relax-2} &\leq O(1) \cdot \frac{1}{n!} \sum_{\sigma \in \mathfrak S_n} \sum_{ \substack{ J \subset \mathcal K_n: J \ltimes \sigma \\ J \text{ is a forest}  } } s^{2|E(J)|} = O(1) \cdot \frac{1}{n!} \sum_{ \substack{ J \subset \mathcal K_n\\ J \text{ is a forest} } } \sum_{ \sigma\in \mathfrak S_n: \Sigma(J)=J } s^{2|E(J)|} \nonumber \\
    &= O(1) \cdot \frac{1}{n!} \sum_{ \substack{ J \subset \mathcal K_n \\ J \text{ is a forest} } } s^{2|E(J)|} \cdot \#\big\{ \sigma\in\mathfrak S_n: \sigma(J)=J \big\} \,. \label{eq-second-moment-relax-6}
\end{align}
Note that 
\begin{align*}
    \#\big\{ \sigma\in\mathfrak S_n: \sigma(J)=J \big\} = (n-|V(J)|)! \cdot \operatorname{Aut}(J) = \frac{n!}{\mathsf{sub}_n(J)} \,.
\end{align*}
Thus, we have
\begin{align*}
    \eqref{eq-second-moment-relax-6} &= O(1) \cdot \sum_{ \substack{ J \subset \mathcal K_n\\ J \text{ is a forest} } } \frac{s^{2|E(J)|}}{\mathsf{sub}_n(J)} = O(1) \cdot \sum_{ \substack{ \mathbf J \in \mathcal U: |E(\mathbf J)| \leq n \\ \mathbf J \text{ is a forest}  } } \sum_{J \subset \mathcal K_n: J \cong \mathbf J} \frac{s^{2|E(J)|}}{\mathsf{sub}_n(J)} \\
    &= O(1) \cdot \sum_{ \substack{ \mathbf J \in \mathcal U:|E(\mathbf J)| \leq n \\ \mathbf J \text{ is a forest}  } } \frac{s^{2|E(\mathbf J)|}}{\mathsf{sub}_n(\mathbf J)} \cdot \#\big\{ J \subset \mathcal K_n: J \cong \mathbf J \big\} = O(1) \cdot \sum_{ \substack{ \mathbf J \in \mathcal U: |E(\mathbf J)| \leq n \\ \mathbf J \text{ is a forest}  } } s^{2|E(\mathbf J)|} \,.
\end{align*}
Now, to show Proposition~\ref{prop-truncate-second-moment}, it suffices to show the following Lemma.

\begin{lemma}
    For $t<\alpha-\epsilon$, we have
    \begin{align*}
        \sum_{  \mathbf J \in \mathcal U: \mathbf J \text{ is a forest} } t^{|E(\mathbf J)|} = O(1) \,.
    \end{align*}
\end{lemma}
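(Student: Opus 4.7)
The plan is to recognize the sum as the generating function of unlabeled forests (weighted by number of edges) and then bound it using Otter's asymptotic enumeration of unlabeled trees.

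First, I would use the fact that every unlabeled forest $\mathbf J \in \mathcal U$ (recall $\mathcal U$ excludes isolated vertices) decomposes uniquely as a multiset of its connected components, each of which is an unlabeled tree with at least one edge. Letting $\mathcal T \subset \mathcal U$ denote the set of isomorphism classes of unlabeled trees with at least one edge, and writing $U_k := \#\{\mathbf T \in \mathcal T : |E(\mathbf T)| = k\}$, the Pólya multiset construction yields
$$\sum_{\mathbf J \in \mathcal U:\, \mathbf J \text{ is a forest}} t^{|E(\mathbf J)|} \ =\ \prod_{\mathbf T \in \mathcal T} \frac{1}{1 - t^{|E(\mathbf T)|}} \ =\ \prod_{k \geq 1} \frac{1}{(1 - t^k)^{U_k}}.$$
Thus it suffices to show that the logarithm of the right-hand side is bounded, i.e., that $\sum_{k \geq 1} U_k \cdot \bigl( -\log(1 - t^k) \bigr) = O(1)$.

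Next, I would invoke Otter's theorem, which states that the number of unlabeled trees on $n$ vertices is asymptotic to $D \cdot n^{-5/2} \alpha^{-n}$ with $\alpha \approx 0.338$. In terms of edges this gives $U_k \leq C' \alpha^{-k}$ for some constant $C'$ and all $k \geq 1$. Combining this with the elementary inequality $-\log(1 - x) \leq 2x$ for $x \in [0, 1/2]$ (which applies since $t < \alpha - \epsilon < 1/2$ and hence $t^k < 1/2$ for every $k \geq 1$), I obtain
$$\sum_{k \geq 1} U_k \bigl( -\log(1 - t^k) \bigr) \ \leq\ 2C' \sum_{k \geq 1} \Bigl( \frac{t}{\alpha} \Bigr)^k \ =\ O\Bigl( \tfrac{1}{1 - t/\alpha} \Bigr),$$
and the hypothesis $t < \alpha - \epsilon$ ensures $t/\alpha < 1 - \epsilon/\alpha$, bounding the right-hand side by a constant depending only on $\epsilon$ and $\alpha$. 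Exponentiating yields the claim.

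The main obstacle is the invocation of Otter's theorem itself, which is the only nontrivial input; everything else is bookkeeping with generating functions. A minor subtlety worth verifying is that the Pólya multiset identity truly applies in the form stated, but this is immediate because two unlabeled forests are isomorphic if and only if their multisets of connected components (as isomorphism classes of trees) coincide. I would mention but not reprove Otter's estimate, citing the classical reference.
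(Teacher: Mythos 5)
Your proposal is correct and follows essentially the same route as the paper: decompose each unlabeled forest into a multiset of unlabeled tree components, write the sum as the product $\prod_k (1-t^k)^{-U_k}$, bound its logarithm by a geometric-type series using $-\log(1-x)\le 2x$, and close with Otter's asymptotic $U_k = O(\alpha^{-k})$. The only cosmetic difference is that you index trees by edge count while the paper indexes by vertex count; the argument is otherwise identical.
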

\begin{proof}
    Note that any forest can be decomposed into the union of disjoint trees. Denote $f_k$ to be the number of unlabeled $k$-trees and list all unlabeled trees as $\{ \mathbf T_{k,i} : 1 \leq i \leq k \}$. Suppose $\mathbf J$ can be decomposed into $c_{k,i}$ copies of unlabeled trees $\mathbf T_{k,i}$ with $1 \leq i \leq f_k, k \geq 1$. Then we have
    \begin{align*}
        \sum_{k \geq 0} \sum_{i=1}^{f_k} c_{k,i} (k-1) = |E(\mathbf J)| \,.
    \end{align*}
    Thus, we have that
    \begin{align*}
        \sum_{  \mathbf J \in \mathcal U: \mathbf J \text{ is a forest} } t^{|E(\mathbf J)|} &= \sum_{ k \geq 2 } \sum_{ c_{k,i} \geq 0: 1 \leq i \leq f_k } t^{ \sum_{k \geq 1} \sum_{i=1}^{f_k} (k-1)c_{k,i} } \\
        &= \prod_{k=2}^{\infty} \Big( \frac{1}{1-t^{(k-1)}} \Big)^{f_k} \leq \exp\Big( 2\sum_{k=2}^{\infty} t^{(k-1)} f_k \Big) \,.
    \end{align*}
    Using \cite{Otter48}, we see that 
    \begin{align*}
        f_k = O(1) \cdot \alpha^{-k}/k^{1.5} \mbox{ as } k \to \infty \,.
    \end{align*}
    Thus, for $t<\alpha-\epsilon$ we have $\sum_{k=2}^{\infty} t^{(k-1)} f_k = O(1)$, finishing the proof of this lemma.
\end{proof}

\section*{Acknowledgment}
The author thanks Zhangsong Li for introducing this problem, as well as for insightful discussions and valuable suggestions that greatly improved this manuscript.

\bibliographystyle{plain}
\small

\end{document}